\documentclass[a4paper,reqno]{amsart}

\usepackage{amssymb}
\usepackage{amstext}
\usepackage{amsmath}
\usepackage{amsthm}
\usepackage{amsfonts}
\usepackage{enumerate}
\usepackage{graphicx}
\usepackage{latexsym}
\usepackage{mathrsfs}
\usepackage{mathtools}
\usepackage{paralist}
\usepackage{xcolor}
\usepackage[colorlinks=true,linkcolor=blue!55!black,citecolor=blue!55!black,urlcolor=blue!55!black]{hyperref}

\hypersetup{
  pdftitle={Self-Orthogonal Tau-Tilting Modules and Tilting Modules III: Finitistic Dimension and Radical Cube Zero Algebras},
  pdfauthor={Xiaojin Zhang and Panyue Zhou},
  pdfsubject={Self-Orthogonal $\tau$-Tilting Modules and Tilting Modules},
  pdfkeywords={tau-tilting module, self-orthogonal module, tilting module, finitistic dimension, annihilator, radical cube zero}
}

\newtheorem{theorem}{Theorem}[section]
\newtheorem{corollary}[theorem]{Corollary}
\newtheorem{lemma}[theorem]{Lemma}
\newtheorem{proposition}[theorem]{Proposition}

\theoremstyle{definition}
\newtheorem{definition}[theorem]{Definition}
\newtheorem{remark}[theorem]{Remark}
\newtheorem{example}[theorem]{Example}
\newtheorem*{theorema}{Theorem A}
\newtheorem*{theoremb}{Theorem B}
\newtheorem*{theoremc}{Theorem C}

\newcommand{\Ext}{\operatorname{Ext}\nolimits}
\newcommand{\Hom}{\operatorname{Hom}\nolimits}
\newcommand{\End}{\operatorname{End}\nolimits}
\newcommand{\Ann}{\operatorname{Ann}\nolimits}
\newcommand{\rad}{\operatorname{rad}\nolimits}
\newcommand{\add}{\mathsf{add}\hspace{.01in}}
\newcommand{\Fac}{\mathsf{Fac}\hspace{.01in}}

\newcommand{\modA}{\mathsf{mod}\hspace{.01in}A}
\newcommand{\findim}{\operatorname{findim}\nolimits}
\newcommand{\pd}{\operatorname{pd}\nolimits}

\numberwithin{equation}{section}

\begin{document}

\title[Self-Orthogonal $\tau$-Tilting Modules and Tilting Modules]{Self-Orthogonal $\tau$-Tilting Modules and Tilting Modules III: Finitistic Dimension}
\thanks{2020 Mathematics Subject Classification: 16G10, 16E10, 16E05.}
\thanks{Keywords: $\tau$-tilting module; self-orthogonal module; tilting module; finitistic dimension; annihilator; radical cube zero.}

\author{Xiaojin Zhang}
\address{X. Zhang: School of Mathematics and Statistics, Jiangsu Normal University, 221116 Xuzhou, P. R. China}
\email{xjzhang@jsnu.edu.cn}

\author{Panyue Zhou}
\address{P. Zhou: School of Mathematics and Statistics, Changsha University of Science and Technology, 410114 Changsha, Hunan, P. R. China}
\email{panyuezhou@163.com}

\begin{abstract}
Let $A$ be an Artin algebra and let $T$ be a $\tau$-tilting right $A$-module. Put $C=A/\Ann_A(T)$. We prove that finite little finitistic dimension of $C^{\rm op}$, together with a finite range of self-extension vanishing for $T$, forces $T$ to be a classical $1$-tilting module. We also show that if the finitistic conjecture holds for all Artin algebras then the self-orthogonal $\tau$-tilting conjecture holds for all Artin algebras. Moreover, a counterexample to the self-orthogonal $\tau$-tilting conjecture will give a counterexample to the finitistic dimension conjecture. As an application, the self-orthogonal $\tau$-tilting conjecture holds for Artin algebras with radical cube zero.
\end{abstract}

\maketitle

\section{Introduction}

Let $A$ be an Artin algebra. Classical tilting modules play a central role in the representation theory of Artin algebras. We refer to Happel and Ringel \cite{HappelRingel} for the classical theory. Adachi, Iyama and Reiten \cite{AIR} introduced $\tau$-tilting theory and showed that it extends classical tilting theory in a natural way. In particular, classical $1$-tilting right $A$-modules are precisely faithful $\tau$-tilting right $A$-modules \cite[Proposition~2.2]{AIR}.

A right $A$-module $T$ is called self-orthogonal if
$$
\Ext_A^i(T,T)=0\quad(i\geq1).
$$
Zhang \cite{Zhang} asked whether every self-orthogonal $\tau$-tilting right $A$-module is a classical $1$-tilting right $A$-module. Following Chen, Li, Zhang and Zhao \cite{CLZZ}, we refer to this statement as the self-orthogonal $\tau$-tilting conjecture. The problem is connected with classical conjectures on self-orthogonal modules. In the self-injective case, Zhang \cite[Section~3]{Zhang} explains its relation to the Auslander-Reiten conjecure \cite{AuslanderReiten} as well as Tachikawa's second conjecture \cite{Tachikawa}. Enomoto \cite{Enomoto} places the problem in a broader framework of maximal self-orthogonal modules. Very recently, Chang and Tang \cite{ChangTang} gave counterexamples to the Self-orthogonal faithful conjecture of Chen, Li, Zhang and Zhao and to Enomoto's Self-orthogonal Wakamatsu-tilting Conjecture. These counterexamples do not settle the self-orthogonal $\tau$-tilting conjecture considered here. The conjecture remains open in general.

Several positive cases are known. Zhang \cite[Theorem~3.4]{Zhang} proved that a self-orthogonal $\tau$-tilting right $A$-module is $1$-tilting if it has finite projective dimension. Chen, Li, Zhang and Zhao \cite[Proposition~5.3]{CLZZ} obtained positive answers for local algebras, syzygy-finite Gorenstein algebras, algebras of finite representation type, and minimal representation-infinite algebras. Marczinzik \cite[Theorem~1.1]{Marczinzik} proved finite projective dimension for self-orthogonal modules over Iwanaga--Gorenstein rings whose indecomposable non-projective maximal Cohen--Macaulay modules are periodic. In particular, his result covers monomial Iwanaga--Gorenstein algebras. Finite-dimensional gentle algebras belong to this class because they are monomial and are Iwanaga--Gorenstein by Geiss and Reiten \cite{GeissReiten}. More recently, Zhang \cite{ZhangII} developed an annihilator separation method and proved the conjecture for Artin algebras with radical square zero.

The recent approaches also reveal two homological objects attached to $T$. One is the endomorphism algebra
$$
B=\End_A(T)^{\rm op}.
$$
Chen, Li, Zhang and Zhao \cite[Theorem~5.4]{CLZZ} use the global delooping level of $B$, while Zhang \cite{ZhangII} obtains a finitistic dimension obstruction involving $B$. The other object is the annihilator quotient
$$
I=\Ann_A(T),\quad C=A/I.
$$
By Adachi, Iyama and Reiten \cite[Proposition~2.2]{AIR}, the right $C$-module $T$ is a classical $1$-tilting module. The original problem is therefore equivalent to proving that $I=0$.

Our main result uses the little finitistic dimension of $C^{\rm op}$. For an Artin algebra $\Gamma$, we write $\findim(\Gamma)$ for the supremum of the finite projective dimensions of finitely generated right $\Gamma$-modules. The finitistic dimension problem goes back to Bass \cite{Bass}. We use the opposite algebra because $\findim(C^{\rm op})$ is the corresponding invariant for finitely generated left $C$-modules.

\begin{theorema}
Let $T$ be a $\tau$-tilting right $A$-module and put $C=A/\Ann_A(T)$. Assume that
$$
d=\findim(C^{\rm op})<\infty.
$$
If
$$
\Ext_A^i(T,T)=0\quad(1\leq i\leq d+2),
$$
then $T$ is a classical $1$-tilting right $A$-module.
\end{theorema}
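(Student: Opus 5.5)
The plan is to show that $I=\Ann_A(T)$ vanishes, or equivalently that $\pd_A T<\infty$; the latter suffices by Zhang's theorem \cite[Theorem~3.4]{Zhang}, since $T$ is then self-orthogonal in the relevant range. The bound $d=\findim(C^{\rm op})$ will be used to force a splitting in a complex of projective left $C$-modules. That complex is obtained by applying $\Hom_A(-,C)$ to a projective resolution of $T_A$.

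\emph{Step 1: transfer vanishing from $T$ to $C$.} By \cite[Proposition~2.2]{AIR}, $T_C$ is classical $1$-tilting. Hence there is an exact sequence $0\to C\to T_0\to T_1\to 0$ with $T_0,T_1\in\add T$. Since $\Hom_A(T,-)=\Hom_C(T,-)$ on $C$-modules, and $\Ext_C^1(T,T_0)=0$, the map $\Hom_A(T,T_0)\to\Hom_A(T,T_1)$ is surjective. Applying $\Hom_A(T,-)$ and using $\Ext_A^i(T,T)=0$ for $1\le i\le d+2$, we get
$$\Ext_A^i(T,C)=0\quad(1\le i\le d+2).$$
(The top degree $i=d+2$ also uses $\Ext_A^{d+2}(T,T_0)=0$.)

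\emph{Step 2: build a complex of projective left $C$-modules.} Take a minimal projective resolution $\cdots\to P_1\to P_0\to T\to0$ over $A$. Each
$$Q_i:=\Hom_A(P_i,C)\cong\Hom_C(P_i/P_iI,C)$$
is a finitely generated projective left $C$-module. By Step 1, the complex
$$0\to\Hom_A(T,C)\to Q_0\to Q_1\to\cdots\to Q_{d+2}$$
is exact at $Q_1,\dots,Q_{d+1}$. Let $X=\operatorname{Coker}(Q_{d}\to Q_{d+1})$, or the analogous cokernel one step further, whichever gives the cleanest shape. Then $X$ has the finite projective presentation
$$Q_0\to Q_1\to\cdots\to Q_{d+1}\to X\to 0.$$
This presentation is exact except possibly at $Q_0$.

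\emph{Step 3: use the finitistic dimension.} The aim is to arrange that some cokernel $X$ in this range has finite projective dimension. Plausible routes are to replace $Q_0$ by the image of $\Hom_A(T,C)$, or to use that $\Hom_C(T,C)$ has finite projective dimension over $C^{\rm op}$ because $T_C$ is tilting. Once $\pd {}_C X<\infty$, we get $\pd X\le d$. Then the image of $Q_{d}\to Q_{d+1}$ is projective and the sequence splits from the left in the top degrees. Dualizing back with $\Hom_C(-,C)$, which is an equivalence on projectives, turns this splitting into a splitting of
$$P_{d+1}\otimes_A C\to P_{d}\otimes_A C.$$
Comparing with the minimality of the resolution, the conclusion should be that the syzygy $\Omega^{d+1}_A T$ satisfies $\Omega^{d+1}_AT\otimes_A C=0$. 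By Nakayama's lemma this gives $\Omega^{d+1}_AT=0$ if $I\subseteq\rad A$. Otherwise one argues directly that $P_{d+1}I=P_{d+1}$ forces $P_{d+1}=0$ in the minimal resolution, using that the top of $T$ is a $C$-module. Either way $\pd_AT\le d$, and \cite[Theorem~3.4]{Zhang} finishes the proof.

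\emph{Main obstacle.} The difficulty is Step 3: producing a left $C$-module of \emph{finite} projective dimension out of the partially exact complex, so that $\findim(C^{\rm op})=d$ can be applied. The extra degrees of vanishing, $d+2$ rather than $d$, are presumably exactly what lets one discard the defect in degree $0$. My first attempt would be to work with the truncation starting at $Q_1$, where exactness holds. I would show that the cokernel there is a $d$-th syzygy-like module, so that the splitting criterion applies. This is the step most likely to need adjustment.
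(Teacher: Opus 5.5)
\textbf{There is a genuine gap, and it starts in Step~1.} Surjectivity of $\Hom_A(T,T_0)\to\Hom_A(T,T_1)$ does not follow from $\Ext_C^1(T,T_0)=0$. The long exact sequence over $C$ identifies its cokernel with $\Ext_C^1(T,C)$, so you are assuming what you want to prove.

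The claim is in fact false in general. If $\Ext_C^1(T,C)=0$, then $C\in\{M\mid\Ext_C^1(T,M)=0\}=\Fac T$, since $T_C$ is classical tilting. So an epimorphism from $\add T$ onto $C$ splits, giving $C\in\add T$, and $T_C$ is projective. Moreover $\Ext_C^1(T,C)$ embeds in $\Ext_A^1(T,C)$. Hence your vanishing fails for every $T$ that is not projective over $C$, for instance any non-projective faithful tilting module. What the sequence really gives is $\Ext_A^i(T,C)=0$ only for $2\le i\le d+2$.

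Consequently the complex $Q_\bullet=\Hom_A(P_\bullet,C)$ built from a resolution of $T$ has two defects that do not vanish in general: $\Hom_A(T,C)$ at $Q_0$ and $\Ext_A^1(T,C)$ at $Q_1$. No truncation of it is a finite projective resolution, so Step~3 never produces a left $C$-module of finite projective dimension. Your suggested remedies, such as $\pd_{{}_C}\Hom_C(T,C)<\infty$ ``because $T_C$ is tilting'', are unsupported. The hypothesis $\findim(C^{\rm op})=d$ is therefore never actually used, and the splitting/Nakayama/\cite[Theorem~3.4]{Zhang} chain has nothing to start from.

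\textbf{The missing idea is to resolve $C_A$ rather than $T_A$.}
\begin{enumerate}[\rm(1)]
\item Apply $\Hom_A(C,-)$ to $0\to C\to T_0\to T_1\to0$. Since $\Hom_A(C,M)\cong M$ for $C$-modules $M$, the map $\Hom_A(C,T_0)\to\Hom_A(C,T_1)$ is literally the epimorphism $T_0\to T_1$, so degree $1$ comes for free. Together with $\Ext_A^j(C,T)=0$ for $1\le j\le d+1$ (from applying $\Hom_A(-,T)$), this gives $\Ext_A^j(C,C)=0$ for $1\le j\le d+1$. This is where one of the $d+2$ degrees is spent.
\item Take the minimal resolution $\cdots\to P_1\xrightarrow{\partial_1}P_0=A\to C\to0$. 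Since $I\subseteq\rad A$, one has $\operatorname{Im}\partial_1=I$. Every $A$-map $A\to C$ kills $I$, so $\delta_0=0$ in $Q_\bullet=\Hom_A(P_\bullet,C)$.
\item Exactness at $Q_1$ therefore makes $\delta_1$ injective. With $X=\operatorname{Coker}\delta_{d+1}$, the sequence $0\to Q_1\to\cdots\to Q_{d+2}\to X\to0$ is an honest finite projective resolution of a left $C$-module.
\item Its differentials are radical, so the resolution is minimal. If $I\neq0$, then $Q_1\neq0$. A radical map onto a nonzero projective is never an epimorphism, so none of $Q_2,\dots,Q_{d+2}$ vanishes.
\item Hence $\pd_{{}_C}X=d+1>d$, a contradiction.
\end{enumerate}
So the paper uses $\findim(C^{\rm op})$ in the opposite direction from your plan. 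It argues by contradiction on $I\neq0$ and exhibits a module of too large projective dimension. It never needs $\pd_AT<\infty$ or \cite[Theorem~3.4]{Zhang}.
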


The theorem has a positive interpretation. It is enough to control the little finitistic dimension of the annihilator quotient. No bound on the finitistic dimension of $A$ is required. Moreover, full self-orthogonality is stronger than necessary. Once $d$ is known, the first $d+2$ self-extension groups already decide faithfulness. The hypothesis is therefore different from the finite projective dimension condition in \cite[Theorem~3.4]{Zhang}. Our condition is imposed on $C^{\rm op}$ rather than on $\pd_A T$. This makes the criterion useful for classes of quotient algebras whose finitistic dimensions are known to be finite.

The theorem also places a strong restriction on any possible counterexample. Rather than producing such a counterexample, it shows that one can exist only when the corresponding annihilator quotient has infinite finitistic dimension.

\begin{theoremb}
If a self-orthogonal $\tau$-tilting right $A$-module $T$ is not a classical $1$-tilting right $A$-module, then
$$
\findim\bigl((A/\Ann_A(T))^{\rm op}\bigr)=\infty.
$$
In fact, $A/\Ann_A(T)$ has finitely generated left modules of every prescribed positive finite projective dimension.
\end{theoremb}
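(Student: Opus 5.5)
The first assertion of Theorem B is the contrapositive of Theorem A. Suppose $d=\findim(C^{\rm op})<\infty$ with $C=A/\Ann_A(T)$. Full self-orthogonality gives in particular $\Ext_A^i(T,T)=0$ for $1\leq i\leq d+2$. Theorem A then makes $T$ a classical $1$-tilting module, which is a contradiction. So $\findim(C^{\rm op})=\infty$. This part is formal.

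For the refinement (left $C$-modules of every positive finite projective dimension), I would run the Theorem A argument more carefully. The idea is to keep the obstruction module and not just the numerical conclusion. I expect the proof of Theorem A to go as follows. Write $I=\Ann_A(T)\neq 0$. Self-orthogonality transfers $\Ext$-vanishing from $\modA$ to $C$-modules through the change-of-rings spectral sequence, or through the long exact sequences coming from $0\to I\to A\to C\to 0$. Here $T$ is $1$-tilting over $C$, so there is a sequence $0\to C\to T_0\to T_1\to 0$ with $T_i\in\add T$. From this one builds, for each $n$, a finitely generated left $C$-module $M_n$, for instance a syzygy or cosyzygy of $D I$ or of $\Hom_A(T,\cdot)$-images related to $I$. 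The vanishing $\Ext_A^i(T,T)=0$ for $i\le n+2$ should force $\pd_{C^{\rm op}}M_n$ to be finite and at least $n$, provided $I\neq0$. I would extract this as the key lemma: \emph{if $I\neq0$ and $\Ext_A^i(T,T)=0$ for $1\le i\le n+2$, then some finitely generated left $C$-module has projective dimension exactly $m$ for some finite $m$ with $n\le m$.}

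Given such a module $M$ with $n\le\pd M=m<\infty$, I obtain every smaller positive value by taking syzygies. The module $\Omega^{j}M$ has projective dimension exactly $m-j$ for $0\le j\le m$. Because $T$ is self-orthogonal, the key lemma applies for every $n$, so $m$ is unbounded. Hence every positive integer $k$ is realized as $\pd_{C^{\rm op}}\Omega^{m-k}M$ for a suitable $M$. Dropping the nonzero projective modules that appear as direct summands causes no trouble. Only nonprojectivity matters, since $k>0$.

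The main obstacle is the key lemma. One has to show the projective dimension is \emph{finite} as well as large. Theorem A by itself only says that $d<\infty$ leads to a contradiction, which shows $\findim=\infty$ but does not directly produce the modules. I would first try to read the module off the proof of Theorem A. If that proof reaches its contradiction by exhibiting a left $C$-module of finite projective dimension greater than $d$, the lemma follows verbatim. Failing that, I would argue by induction on $n$. I would build a finite-length resolution of $\Hom_A(T,I)$-type modules by $\add$ of $C$-projectives, using $\Ext$-vanishing up to degree $n+2$, and show that its last map cannot split because $I\neq0$.
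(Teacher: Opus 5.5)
Your argument is correct, and your first paragraph already essentially finishes the proof: the step you single out as the main obstacle is not one. By definition, $\findim(C^{\rm op})$ is a supremum over finitely generated left $C$-modules of finite projective dimension. So $\findim(C^{\rm op})=\infty$ means exactly that for every $n$ there is a finitely generated left $C$-module $M$ with $n\le \pd M<\infty$. That is your key lemma for self-orthogonal $T$. Your syzygy step then gives $\pd\Omega^{m-k}M=k$ for each $k\geq 1$.

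Your claim that Theorem A ``does not directly produce the modules'' is therefore mistaken. The speculative constructions from $DI$ or $\Hom_A(T,I)$, which as written are not an argument, can simply be deleted. Even the finite-range version of your lemma is formal. If $I\neq0$ and $\Ext_A^i(T,T)=0$ for $1\le i\le n+2$, then Theorem A forces $\findim(C^{\rm op})\geq n+1$, and a finite supremum of integers is attained.

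With that observation, your proof is a valid, purely formal deduction from Theorem A (contrapositive, then supremum, then syzygies). It differs from the paper's route, which makes the modules explicit:
\begin{enumerate}[\rm(i)]
\item The tilting sequence $0\to C\to T_0\to T_1\to 0$ transfers self-orthogonality of $T$ to $\Ext_A^i(C,C)=0$ for all $i\geq1$. It does not use a change-of-rings argument.
\item Applying $\Hom_A(-,C)$ to a minimal projective resolution of $C_A$ gives a complex $0\to Q_0\xrightarrow{0}Q_1\to Q_2\to\cdots$ of projective left $C$-modules. Its differentials are radical, it is exact in positive degrees, and $Q_1\neq 0$ because $I\neq0$.
\item Setting $X_n=\operatorname{Coker}(Q_n\to Q_{n+1})$, the sequence $0\to Q_1\to\cdots\to Q_{n+1}\to X_n\to 0$ is a minimal projective resolution. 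So $\pd X_n=n$ directly, with no syzygy step.
\end{enumerate}
This is the same mechanism that proves Theorem A. That proof exhibits a left $C$-module of projective dimension $d+1$, so your fallback of reading the module off it would also have worked verbatim.

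Your route is shorter once Theorem A is available as a black box. The paper's route yields concrete witnesses $X_n$, all built from a single complex attached to $C_A$, and does not depend on Theorem A.
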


Thus a counterexample to the self-orthogonal $\tau$-tilting conjecture would yield an explicit Artin algebra with infinite little finitistic dimension, which is a counterexample to the finitistic dimension conjecture. At the level of conjectures, it is known that the finitistic dimension conjecture for all Artin algebras implies the Auslander--Reiten conjecture for all Artin algebras, see Wei \cite{Wei}. Our conclusion is more specific to the present problem. A counterexample determines the particular quotient $C=A/\Ann_A(T)$ on which the little finitistic dimension must be infinite. The two directions of our result can be summarized as
$$
\findim(C^{\rm op})<\infty
\quad\Longrightarrow\quad
T\text{ is }1\text{-tilting},
$$
for self-orthogonal $T$, while
$$
T\text{ is a counterexample}
\quad\Longrightarrow\quad
\findim(C^{\rm op})=\infty.
$$
The second implication is only a necessary condition for a counterexample. Its converse is not asserted. An algebra with infinite little finitistic dimension does not by itself produce a self-orthogonal $\tau$-tilting module with nonzero annihilator.

The first concrete application is the radical cube zero case.

\begin{theoremc}
Let $A$ be an Artin algebra with $\rad^3 A=0$. Then every self-orthogonal $\tau$-tilting right $A$-module is a classical $1$-tilting right $A$-module.
\end{theoremc}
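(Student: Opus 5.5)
The plan is to deduce Theorem C directly from Theorem A. Let $T$ be a self-orthogonal $\tau$-tilting right $A$-module, put $I=\Ann_A(T)$ and $C=A/I$. Since $T$ is self-orthogonal, $\Ext_A^i(T,T)=0$ for all $i\geq1$. In particular this holds for $1\leq i\leq d+2$, whatever finite value $d$ takes. So the only thing to prove is
$$
d=\findim(C^{\rm op})<\infty ,
$$
after which Theorem A gives that $T$ is a classical $1$-tilting module.

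First, the radical cube zero condition passes to $C^{\rm op}$. The canonical surjection $A\to C$ maps $\rad A$ onto $\rad C$, because $C/\bigl((\rad A+I)/I\bigr)\cong A/(\rad A+I)$ is semisimple and $(\rad A+I)/I$ is nilpotent. Hence
$$
(\rad C)^3=\bigl((\rad A)^3+I\bigr)/I=0 .
$$
The radical of an Artin algebra coincides with the radical of its opposite algebra, so $\rad^3(C^{\rm op})=0$ as well. Thus $C^{\rm op}$ is again an Artin algebra with radical cube zero.

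Second, I would invoke the known finiteness of the little finitistic dimension for Artin algebras $\Gamma$ with $\rad^3\Gamma=0$. Two references are available:
\begin{enumerate}[(i)]
\item Green and Huisgen-Zimmermann, who treat artinian rings with vanishing radical cube;
\item the case $l=1$ of Y. Wang's theorem that $\findim(\Gamma)<\infty$ whenever $\rad^{2l+1}\Gamma=0$ and $\Gamma/\rad^l\Gamma$ is representation-finite. For $l=1$ the quotient $\Gamma/\rad\Gamma$ is semisimple, so this hypothesis is automatic.
\end{enumerate}
If a self-contained argument is preferred, I would run it via the Igusa--Todorov function. For any module $M$, the syzygy $\Omega M$ lies in $\rad P$ for a projective cover $P$, so $\rad^2(\Omega M)=0$. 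The second syzygy $\Omega^2M$ then lies in the radical of a projective cover of $\Omega M$, which is a semisimple-by-semisimple situation. The Igusa--Todorov bound $\findim\le \phi\dim$ together with control of $\Omega^2(\mathsf{mod}\,\Gamma)$ yields a uniform bound. Applying either route to $\Gamma=C^{\rm op}$ gives $d<\infty$, and Theorem A finishes the proof.

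The main obstacle is this second step. One must make sure that the cited finiteness result is stated for Artin algebras and not only for finite-dimensional algebras over a field, and that it concerns the little finitistic dimension of finitely generated modules, which is what $\findim$ means here. I would check Wang's hypotheses first, since his proof is Artin-algebra-friendly. Only if that fails would I fall back on the Igusa--Todorov argument sketched above, applied to $C^{\rm op}$.
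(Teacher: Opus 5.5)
Your proposal is correct and follows the paper's route: you show $\rad^3(C^{\rm op})=0$, then invoke Green--Zimmermann-Huisgen to get $\findim(C^{\rm op})<\infty$, and conclude via Theorem A, exactly as in Corollary~\ref{cor:selforthogonal} and Theorem~\ref{thm:radical-cube-zero}. For the first step your surjection argument works without needing $I\subseteq\rad A$, which the paper uses instead. The Igusa--Todorov fallback is only a sketch and would not stand as a proof on its own, but you do not need it, since the cited theorem applies directly to the artinian ring $C^{\rm op}$.
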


Indeed, every quotient $C=A/\Ann_A(T)$ also has radical cube zero. Green and Zimmermann-Huisgen \cite{GreenHuisgen} proved that Artinian rings with vanishing radical cube have finite little finitistic dimension. Applying their theorem to $C^{\rm op}$ gives Theorem C. This extends the radical square zero result in \cite{ZhangII} to radical cube zero.

The paper is organized as follows. In Section 2, we recall the required notions from tilting theory and $\tau$-tilting theory. We also record two facts about minimal projective resolutions. In Section 3, we prove the finitistic dimension criterion, build a connection between the finitistic dimension conjecture and the self-orthogonal $\tau$-tilting conjecture, and explain the necessary condition for counterexamples. In Section 4, we prove the radical cube zero case and give further consequences of the main criterion. We also give examples which illustrate the positive result and the restriction on possible annihilators.

Throughout this paper, $A$ is an Artin $R$-algebra, where $R$ is a commutative Artin ring. Unless stated otherwise, all $A$-modules are finitely generated right $A$-modules. We write $\modA$ for the category of finitely generated right $A$-modules. The standard duality is denoted by $D$, and $\tau=D\operatorname{Tr}$ is the Auslander--Reiten translation.

\section{Preliminaries}

In this section, we recall the definitions and facts used later.

For a right $A$-module $M$, let $\add M$ be the full subcategory of direct summands of finite direct sums of copies of $M$. We write $\Fac M$ for the full subcategory of factor modules of objects in $\add M$. The annihilator of $M$ is
$$
\Ann_A(M)=\{a\in A\mid Ma=0\}.
$$
For a basic right $A$-module $M$, the number of isomorphism classes of indecomposable direct summands is denoted by $|M|$.

We use the standard notion of a classical $1$-tilting module. The equivalent forms below are recalled in \cite{HappelRingel}.

\begin{definition}\label{def:tilting}
A right $A$-module $T\in\modA$ is called a \emph{classical $1$-tilting module} if the following conditions hold.
\begin{enumerate}[\rm(i)]
\item $\pd_A T\leq1$.
\item $\Ext_A^1(T,T)=0$.
\item $|T|=|A|$.
\end{enumerate}
\end{definition}

The last condition can be replaced by the existence of an exact sequence
$$
0\longrightarrow A\longrightarrow T_0\longrightarrow T_1\longrightarrow0,
\quad T_0,T_1\in\add T.
$$
This standard equivalence is due to Bongartz \cite{Bongartz}.

The next definitions are due to Adachi, Iyama and Reiten \cite{AIR}.

\begin{definition}\label{def:tau-tilting}
Let $T\in\modA$. The right $A$-module $T$ is called $\tau$-rigid if
$$
\Hom_A(T,\tau T)=0.
$$
It is called $\tau$-tilting if it is $\tau$-rigid and $|T|=|A|$.
\end{definition}

The self-orthogonality condition used in the conjecture is the following one. See \cite{Zhang,Enomoto} for its role in this problem.

\begin{definition}\label{def:selforthogonal}
A right $A$-module $T\in\modA$ is called \emph{self-orthogonal} if
$$
\Ext_A^i(T,T)=0\quad(i\geq1).
$$
\end{definition}

The next fact explains why the annihilator is the natural object in the problem.

\begin{proposition}\label{prop:quotient-tilting}
Let $T$ be a $\tau$-tilting right $A$-module. Put
$$
I=\Ann_A(T),\quad C=A/I.
$$
Then $I\subseteq\rad A$, and $T$ is a classical $1$-tilting right $C$-module. Moreover, $T$ is a classical $1$-tilting right $A$-module if and only if $I=0$.
\end{proposition}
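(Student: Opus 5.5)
We prove the three assertions in turn, relying on the results of Adachi, Iyama and Reiten \cite{AIR}. Throughout, we use that $\modA$ contains $\mathsf{mod}\,C$ as the full subcategory of modules annihilated by $I$. Since $TI=0$, $T$ is naturally a right $C$-module. The key point is that $\tau$-rigidity passes to quotients. If $J$ is an ideal with $MJ=0$, then $\tau_{A/J}M$ is a submodule of $\tau_A M$, so $\Hom_{A/J}(M,\tau_{A/J}M)\hookrightarrow\Hom_A(M,\tau_A M)$ (\cite[Lemma~2.1]{AIR}). Hence $T$ is a $\tau$-rigid $C$-module.

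Next we count summands. We first show $I\subseteq\rad A$. Suppose not. Then $I\not\subseteq \rad A$, so there is a primitive idempotent $e$ with $e\in I+\rad A$; equivalently, the simple module $S=eA/e\rad A$ satisfies $SI\neq 0$. Then every module $M$ with $MI=0$ satisfies $\Hom_A(eA,M)=Me=0$ modulo radical considerations. More precisely, $Me\subseteq MI+M\rad A$, and Nakayama's lemma then forces $Me=0$. Thus $T$ is a $\tau$-rigid module over $A/\langle e\rangle$ as well. That algebra has at most $|A|-1$ simples, so by \cite[Proposition~1.3]{AIR} we get $|T|\le |A|-1$, a contradiction.

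Therefore $I\subseteq\rad A$, and $C$ has the same simples as $A$, so $|C|=|A|=|T|$. Hence $T$ is a $\tau$-tilting $C$-module, and it is faithful over $C$ by construction. By \cite[Proposition~2.2]{AIR}, faithful $\tau$-tilting modules are exactly the classical $1$-tilting modules, so $T$ is a classical $1$-tilting right $C$-module.

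For the final equivalence, first suppose $I=0$. Then $C=A$ and we are done. Conversely, a classical $1$-tilting module is faithful: the Bongartz sequence $0\to A\to T_0$ embeds $A$ into a module in $\add T$, so $\Ann_A(T)=0$.

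The main obstacle is the reduction $I\subseteq\rad A$. Rather than the idempotent argument above, an alternative route is to invoke that a $\tau$-tilting module is sincere (\cite[Proposition~2.2(a)]{AIR}). Since $T$ is sincere, every idempotent $e$ satisfies $Te\ne0$, so no primitive idempotent lies in $I+\rad A$. This shows $I\subseteq\rad A$ immediately, because an ideal not contained in $\rad A$ contains an idempotent modulo the radical, and a lift of it acts nontrivially on top of $T$.
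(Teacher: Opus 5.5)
Your proof is correct and follows the paper's route: the paper cites sincerity of $\tau$-tilting modules (\cite[Proposition~2.2(a)]{AIR}, with \cite[Proposition~2.5(3)]{CLZZ}) to get $I\subseteq\rad A$, and \cite[Proposition~2.2(b),(c)]{AIR} for the rest, whereas you re-derive these from \cite[Lemma~2.1]{AIR} and the bound $|M|\leq|A|$, and you use the Bongartz sequence to get faithfulness. One small repair is needed: the inclusion $Me\subseteq M\rad A$ alone does not give $Me=0$; instead write $Me=Mee\subseteq MeI+Me\,\rad A=Me\,\rad A$ (using $MI=0$), and apply Nakayama to $MeA$ --- this fact, that an idempotent in $I+\rad A$ acts as zero on any module killed by $I$, is also what your alternative route actually needs, since sincerity does not mean that $e$ acts nontrivially on the top of $T$.
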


\begin{proof}
By \cite[Proposition~2.2(a)]{AIR}, every $\tau$-tilting module is sincere. More precisely, \cite[Proposition~2.5(3)]{CLZZ} shows that the annihilator of a $\tau$-tilting module is nilpotent. Hence $I\subseteq\rad A$. By \cite[Proposition~2.2(c)]{AIR}, the right $C$-module $T$ is a classical $1$-tilting module. Since $T$ is $\tau$-tilting, \cite[Proposition~2.2(b)]{AIR} implies that $T$ is a classical $1$-tilting right $A$-module exactly when it is faithful. Thus this happens exactly when $I=0$. The quotient viewpoint is also developed in \cite[Theorem~3.5]{CLZZ}.
\end{proof}

Since $I\subseteq\rad A$, the quotient satisfies
$$
\rad C=(\rad A)/I.
$$

We next recall the homological invariant used in the main result.

The little finitistic dimension originates in the classical work of Bass \cite{Bass}. We use the following convention.

\begin{definition}\label{def:findim}
Let $\Gamma$ be an Artin algebra. Its \emph{little finitistic dimension} is
$$
\findim(\Gamma)=\sup\{\pd_\Gamma X\mid X\in\mathsf{mod}\,\Gamma,\ \pd_\Gamma X<\infty\}.
$$
Thus $\findim(C^{\rm op})$ is computed on finitely generated left $C$-modules.
\end{definition}

We will also use two elementary facts about minimal projective resolutions. We use the standard properties of projective covers and radical morphisms for Artin algebras as in \cite{ARS}. Recall that a morphism between finitely generated projective modules is radical if it belongs to the radical of the category of finitely generated projective modules.

\begin{lemma}\label{lem:radical-epi}
Let $\Gamma$ be an Artin algebra. Let $f:P\to Q$ be a radical morphism between finitely generated projective right $\Gamma$-modules. If $Q\neq0$, then $f$ is not an epimorphism.
\end{lemma}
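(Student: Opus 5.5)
The argument combines the defining property of a radical morphism with Nakayama's lemma. Since $f$ lies in the radical of the category of finitely generated projective $\Gamma$-modules, I will first show that $\operatorname{Im} f\subseteq Q\,\rad\Gamma$. Then, because $Q$ is finitely generated and nonzero, Nakayama's lemma gives $Q\,\rad\Gamma\neq Q$. Together these show that $f$ cannot be surjective.

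\textbf{Step 1: reduction to indecomposable summands.} Write $P=\bigoplus_j P_j$ and $Q=\bigoplus_k Q_k$ with all $P_j,Q_k$ indecomposable projective. Each $P_j$ has the form $e_j\Gamma$ for a primitive idempotent $e_j$. The radical of the category is an ideal and is compatible with finite direct sums. Hence $f$ is radical if and only if every component $f_{kj}\colon P_j\to Q_k$ lies in $\rad(P_j,Q_k)$.

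\textbf{Step 2: each radical component lands in $Q_k\,\rad\Gamma$.} For indecomposable modules, $\rad(P_j,Q_k)$ consists exactly of the non-isomorphisms. Suppose the image of $f_{kj}$ were not contained in $Q_k\,\rad\Gamma$. Then the composite
$$P_j\to Q_k\to Q_k/Q_k\,\rad\Gamma$$
would be nonzero. Its target is simple, since $Q_k$ is indecomposable projective, so the composite would be surjective. Because $Q_k\,\rad\Gamma$ is superfluous in $Q_k$, the map $f_{kj}$ would then be surjective. As $Q_k$ is projective, $f_{kj}$ would split. Since $P_j$ is indecomposable, $f_{kj}$ would be an isomorphism, a contradiction. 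Therefore
$$\operatorname{Im} f_{kj}\subseteq Q_k\,\rad\Gamma \quad\text{for all } j,k.$$

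\textbf{Step 3: conclusion.} Summing over the components gives
$$\operatorname{Im} f\subseteq\bigoplus_k Q_k\,\rad\Gamma=Q\,\rad\Gamma.$$
Since $Q\neq0$ is finitely generated, Nakayama's lemma gives $Q\,\rad\Gamma\subsetneq Q$. Hence $f$ is not an epimorphism.

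The main point needing care is Step 2, namely identifying radical morphisms between indecomposable projectives with the maps whose image lies in $Q_k\,\rad\Gamma$. This follows from the standard facts in \cite{ARS} used above: $\Gamma$ is semiperfect, so every indecomposable projective has simple top, and the radical of the category between indecomposables is the set of non-isomorphisms.
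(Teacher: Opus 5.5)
Your proof is correct, but it takes a longer route than the paper's. The paper argues directly. If $f$ were an epimorphism, projectivity of $Q$ would give $s\colon Q\to P$ with $fs=1_Q$. Since the radical is a two-sided ideal of the category, this would put $1_Q=fs$ in $\rad\End_\Gamma(Q)$, which is impossible for $Q\neq0$. That argument needs no decomposition into indecomposables, no description of radical maps between indecomposables as non-isomorphisms, no simple tops, and no Nakayama lemma. In fact, your Step 2 already contains the same splitting argument, applied to each indecomposable component.

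What your route buys is the stronger statement $\operatorname{Im} f\subseteq Q\,\rad\Gamma$ for every radical morphism between finitely generated projectives. This is exactly the kind of fact the paper needs later. It is used in the last assertion of Lemma~\ref{lem:dual-radical}, and in the claim in the proof of Theorem~\ref{thm:main} that $Q_{d+2}\to X$ is a projective cover. There the paper obtains it separately, by a component calculation. Your argument establishes it in general, whereas the paper's proof of the present lemma yields only non-surjectivity.
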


\begin{proof}
Suppose that $f$ is an epimorphism. Since $Q$ is projective, there is a morphism $s:Q\to P$ such that $fs=1_Q$. The radical is an ideal, so $fs$ belongs to $\rad\End_\Gamma(Q)$. This is impossible for a nonzero module $Q$.
\end{proof}

\begin{lemma}\label{lem:dual-radical}
Let $I\subseteq\rad A$ and put $C=A/I$. Let $u:P'\to P$ be a radical morphism between finitely generated projective right $A$-modules. Then $\Hom_A(P,C)$ and $\Hom_A(P',C)$ are finitely generated projective left $C$-modules. The induced map
$$
\Hom_A(u,C):\Hom_A(P,C)\longrightarrow\Hom_A(P',C)
$$
is radical. Moreover, its image is contained in the radical of $\Hom_A(P',C)$.
\end{lemma}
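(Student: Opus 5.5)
The plan is to reduce everything to decompositions into indecomposable projectives, using the standard identification $\Hom_A(e A,C)\cong Ce$ for an idempotent $e\in A$.

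\textbf{Projectivity.} Write $P=\bigoplus_j e_jA$ and $P'=\bigoplus_i e'_iA$ with primitive idempotents $e_j,e'_i$. Then
$$
\Hom_A(P,C)\cong\bigoplus_j Ce_j,\qquad \Hom_A(P',C)\cong\bigoplus_i Ce'_i,
$$
and each $Ce_j=\bar e_jC$ is a direct summand of the left module $C$, where $\bar e_j$ denotes the image of $e_j$. So both modules are finitely generated projective left $C$-modules. Since $I\subseteq\rad A$, each $\bar e_j$ is still a nonzero primitive idempotent, although we do not need this.

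\textbf{The induced map.} In components, $u$ is a matrix of maps $e'_iA\to e_jA$, each given by left multiplication by some $a_{ji}\in e_jAe'_i$. Because $u$ is radical, every $a_{ji}$ lies in $e_j(\rad A)e'_i$. This uses the standard fact that, between finitely generated projectives, the categorical radical corresponds to $\rad A$. Under the identification above, $\Hom_A(u,C)$ sends $x\in Ce_j$ to right multiplication by $\bar a_{ji}$, landing in $Ce'_i$. Since $\bar a_{ji}\in\rad C=(\rad A)/I$, the image of $\Hom_A(u,C)$ is contained in
$$
\bigoplus_i C(\rad C)e'_i=\rad C\cdot\Bigl(\bigoplus_i Ce'_i\Bigr)=\rad\Hom_A(P',C),
$$
which is the last assertion.

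\textbf{Radicality.} A morphism between finitely generated projective left $C$-modules whose image lies in the radical of its target is radical. Indeed, for any $s$ going the other way, the composite $\Hom_A(u,C)\circ s$ has image in the radical of the target. Such an endomorphism of a projective module lies in the radical of its endomorphism ring, and it is nilpotent.

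\textbf{Main obstacle.} The only delicate point is the bookkeeping between left and right modules and the translation of the categorical radical into matrix entries in $\rad A$. I would handle it by reducing to indecomposable projectives, using additivity of the radical of a category.
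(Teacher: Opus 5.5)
Your proof is correct and is essentially the paper's argument. You reduce to components $e'_iA\to e_jA$ given by elements of $e_j(\rad A)e'_i$, identify $\Hom_A(eA,C)\cong C\bar e$, and observe that the induced map is right multiplication by the image of that element in $\rad C=(\rad A)/I$.

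There are two small differences worth noting. You deduce radicality of $\Hom_A(u,C)$ from its image lying in $\rad\Hom_A(P',C)$, via nilpotency of $\Hom_A(u,C)\circ s$, whereas the paper reads radicality off componentwise from $\bar f(\rad C)\bar e$. Also, the summand of ${}_CC$ you want is $C\bar e_j$, not $\bar e_jC$, which is a right ideal.
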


\begin{proof}
The first statement follows because $P$ and $P'$ are direct summands of finite direct sums of copies of $A$. We have $\Hom_A(A,C)\cong C$ as left $C$-modules.

For the second statement, decompose $P$ and $P'$ into indecomposable projective summands. It is enough to consider a component
$$
v:eA\longrightarrow fA,
$$
where $e$ and $f$ are primitive idempotents. Under the standard identification
$$
\Hom_A(eA,fA)\cong fAe,
$$
a radical morphism corresponds to an element of $f(\rad A)e$. Applying $\Hom_A(-,C)$ gives a map
$$
Cf\longrightarrow Ce
$$
whose defining element is the image of that element in $fCe$. Since
$$
\rad C=(\rad A)/I,
$$
this image belongs to $f(\rad C)e$. Hence each component of $\Hom_A(u,C)$ is radical. Therefore $\Hom_A(u,C)$ is a radical morphism between finitely generated projective left $C$-modules. The same component calculation gives
$\operatorname{Im}\Hom_A(u,C)\subseteq\rad\Hom_A(P',C).$
\end{proof}

\section{The finitistic dimension criterion}

In this section, we give the finitistic dimension criterion which will connect the self-orthogonal $\tau$-tilting conjecture with the finitistic dimension conjecture.

Let $T$ be a $\tau$-tilting right $A$-module. Put $I=\Ann_A(T)$ and $C=A/I$. By Proposition~\ref{prop:quotient-tilting}, the right $C$-module $T$ is classical $1$-tilting. Hence there is an exact sequence
\begin{equation}\label{eq:tilting-C}
0\longrightarrow C\longrightarrow T_0\longrightarrow T_1\longrightarrow0,
\qquad T_0,T_1\in\add T.
\end{equation}

The next result transfers a finite range of self-extension vanishing from $T$ to $C$.

\begin{proposition}\label{prop:vanishing-transfer}
Let $m\geq2$. Assume that
$$
\Ext_A^i(T,T)=0\quad(1\leq i\leq m).
$$
Then
$$
\Ext_A^j(C,C)=0\quad(1\leq j\leq m-1).
$$
In particular, if $T$ is self-orthogonal, then
$$
\Ext_A^j(C,C)=0\qquad(j\geq1).
$$
\end{proposition}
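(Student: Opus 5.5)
Throughout, I view $C$ and the modules in \eqref{eq:tilting-C} as right $A$-modules via the projection $A\to C$. The sequence \eqref{eq:tilting-C} is then an exact sequence of right $A$-modules
$$
0\longrightarrow C\longrightarrow T_0\longrightarrow T_1\longrightarrow0,
\qquad T_0,T_1\in\add T,
$$
and all $\Ext$ groups below are over $A$. The plan is to play \eqref{eq:tilting-C} against itself twice: first in the second variable with a summand of $T$ in the first variable, and then in the first variable with $C$ in the second variable.

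\emph{Step 1: $\Ext_A^j(T,C)=0$ for $1\leq j\leq m-1$.} Apply $\Hom_A(T',-)$ to \eqref{eq:tilting-C}, where $T'\in\add T$. Part of the long exact sequence reads
$$
\Ext_A^{j-1}(T',T_0)\to\Ext_A^{j-1}(T',T_1)\to\Ext_A^j(T',C)\to\Ext_A^j(T',T_0).
$$
For $2\leq j\leq m$, the outer terms $\Ext^{j-1}_A(T',T_1)$ and $\Ext^j_A(T',T_0)$ vanish by hypothesis, so $\Ext^j_A(T',C)=0$ for $2\leq j\leq m$.

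The case $j=1$ needs separate care. Here $\Ext^1_A(T',T_0)=0$, so it suffices that $\Hom_A(T',T_0)\to\Hom_A(T',T_1)$ is surjective. Every $A$-homomorphism between $C$-modules is a $C$-homomorphism. Over $C$, the module $T$ is tilting, so $\Ext^1_C(T',C)=0$, and therefore this map is surjective. Hence $\Ext^1_A(T',C)=0$, and so $\Ext^j_A(T',C)=0$ for $1\leq j\leq m$.

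\emph{Step 2: transfer to $C$ in the first variable.} Apply $\Hom_A(-,C)$ to \eqref{eq:tilting-C}:
$$
\Ext_A^j(T_0,C)\to\Ext_A^j(C,C)\to\Ext_A^{j+1}(T_1,C).
$$
For $1\leq j\leq m-1$ both outer terms vanish by Step 1, since $j\leq m$ and $j+1\leq m$. Thus $\Ext^j_A(C,C)=0$ for $1\leq j\leq m-1$, as claimed. The self-orthogonal case follows by letting $m$ be arbitrary.

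\emph{Main obstacle.} The only delicate point is the case $j=1$ in Step 1, where the Ext-vanishing hypothesis alone gives nothing. There I rely on Proposition~\ref{prop:quotient-tilting}, namely that $T$ is tilting over $C$, together with the fact that $\Hom_A=\Hom_C$ on $C$-modules. This lets me identify $\Ext^1_A(T',C)$ with the cokernel of a map that is surjective by the $C$-tilting property. The Bongartz-type sequence \eqref{eq:tilting-C} also arises from the $C$-tilting property; I take it as given from the paper.
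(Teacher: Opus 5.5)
Your Step~1 is correct for $2\le j\le m$, so Step~2 does give $\Ext^j_A(C,C)=0$ for $2\le j\le m-1$. This part is a legitimate variant of the paper's argument with the two variables swapped. The gap is the case $j=1$. The claim that $\Ext^1_C(T',C)=0$ because $T$ is tilting over $C$ is false. Tilting gives $\Ext^1_C(T,T)=0$, not vanishing against $C$.

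The claimed surjectivity of $\Hom_A(T',T_0)\to\Hom_A(T',T_1)$ for all $T'\in\add T$ holds only in a trivial case. Take $T'=T_1$: surjectivity would lift $1_{T_1}$, so \eqref{eq:tilting-C} would split. Then $C_C\in\add T$, and $|T|=|C|$ gives $\add T=\add C_C$, i.e.\ $T$ is projective over $C$. For instance, over $k(1\to2)$ with $T=P\oplus S$ ($P$ projective-injective, $S$ simple injective), the nonsplit sequence $0\to P'\to P\to S\to 0$ shows $\Ext^1_C(T,C)\neq0$.

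You also cannot rescue Step~2 at $j=1$ by checking $\Ext^1_A(T_0,C)=0$ only for the particular $T_0$. Take linearly oriented $A_3$, labelled so that $P_3\subset P_2\subset P_1$, with $C=A$ and $T=P_1\oplus P_2/P_3\oplus P_1/P_3$. This $T$ is tilting, so all hypotheses hold. The map $C\to T_0$ in \eqref{eq:tilting-C} is always a left $\add T$-approximation, because $\Ext^1_A(T_1,T)=0$. The projection $P_2\to P_2/P_3$ must therefore factor through it. It cannot factor through $\add(P_1\oplus P_1/P_3)$, since both of these modules have top $S_1$. Hence $P_2/P_3$ is a summand of $T_0$ for every choice of \eqref{eq:tilting-C}. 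But $\Ext^1_A(P_2/P_3,P_3)\neq0$, witnessed by $0\to P_3\to P_2\to P_2/P_3\to0$. So your route cannot produce $\Ext^1_A(C,C)=0$.

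The paper avoids this by putting $C$ in the first variable:
\begin{itemize}
\item Apply $\Hom_A(-,T)$ to \eqref{eq:tilting-C}. This gives $\Ext^j_A(C,T)=0$ for $1\le j\le m-1$; in particular $\Ext^1_A(C,T_0)=0$, which is where $m\ge2$ is used.
\item Apply $\Hom_A(C,-)$. Since $\Hom_A(C,M)\cong M$ naturally for $C$-modules $M$, the map $\Hom_A(C,T_0)\to\Hom_A(C,T_1)$ is the epimorphism $T_0\to T_1$ itself.
\item Hence $\Ext^1_A(C,C)$ embeds into $\Ext^1_A(C,T_0)=0$.
\end{itemize}
So the $\Hom$-level surjectivity you need is automatic when the fixed argument is $C$, but fails when it is $T'$. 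If you replace your $j=1$ step by this argument, your treatment of $2\le j\le m-1$ can stay as it is.
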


\begin{proof}
Apply $\Hom_A(-,T)$ to \eqref{eq:tilting-C}. Since $T_0,T_1\in\add T$, the long exact sequence gives
$$
\Ext_A^j(C,T)=0\qquad(1\leq j\leq m-1).
$$
The same vanishing holds when $T$ is replaced by $T_0$ or $T_1$.

Now apply $\Hom_A(C,-)$ to \eqref{eq:tilting-C}. For every $C$-module $M$, evaluation at $1+I$ gives a natural isomorphism
$$
\Hom_A(C,M)\cong M.
$$
Under these isomorphisms, the map
$$
\Hom_A(C,T_0)\longrightarrow\Hom_A(C,T_1)
$$
is the epimorphism $T_0\to T_1$ in \eqref{eq:tilting-C}. It follows that $\Ext_A^1(C,C)=0$.

For $2\leq j\leq m-1$, the long exact sequence contains
$$
\Ext_A^{j-1}(C,T_1)\longrightarrow\Ext_A^j(C,C)
\longrightarrow\Ext_A^j(C,T_0).
$$
The two outer terms vanish. Hence $\Ext_A^j(C,C)=0$.
\end{proof}

\begin{remark}\label{rem:finite-range}
The finite range in Proposition~\ref{prop:vanishing-transfer} is essential for the quantitative form of the main theorem. Full self-orthogonality is not needed once a bound for $\findim(C^{\rm op})$ is available. This distinguishes the result from the finite projective dimension criterion in \cite[Theorem~3.4]{Zhang}, where the hypothesis concerns $\pd_A T$ itself.
\end{remark}

We now prove the main result. Take a minimal projective resolution of $C$ as a right $A$-module
\begin{equation}\label{eq:minimal-resolution}
\cdots\longrightarrow P_2\xrightarrow{\partial_2}P_1
\xrightarrow{\partial_1}P_0=A\longrightarrow C\longrightarrow0.
\end{equation}
Since $I\subseteq\rad A$, the canonical epimorphism $A\to C$ is a projective cover. Therefore
$$
\operatorname{Im}\partial_1=I.
$$
For $i\geq0$, put
$$
Q_i=\Hom_A(P_i,C).
$$
Every $Q_i$ is a finitely generated projective left $C$-module. Applying $\Hom_A(-,C)$ to \eqref{eq:minimal-resolution} gives
\begin{equation}\label{eq:dual-complex}
0\longrightarrow Q_0\xrightarrow{\delta_0}Q_1
\xrightarrow{\delta_1}Q_2\xrightarrow{\delta_2}Q_3\longrightarrow\cdots.
\end{equation}
The $i$th cohomology of this complex is $\Ext_A^i(C,C)$.

We record two features of \eqref{eq:dual-complex}. First, $\delta_0=0$. Indeed, every $A$-homomorphism $A\to C$ vanishes on $I=\operatorname{Im}\partial_1$. Second, every $\delta_i$ with $i\geq1$ is radical by Lemma~\ref{lem:dual-radical}.

\begin{theorem}\label{thm:main}
Let $T$ be a $\tau$-tilting right $A$-module. Put
$$
I=\Ann_A(T),\quad C=A/I.
$$
Assume that
$$
d=\findim(C^{\rm op})<\infty.
$$
If
$$
\Ext_A^i(T,T)=0\quad(1\leq i\leq d+2),
$$
then $I=0$. Hence $T$ is a classical $1$-tilting right $A$-module.
\end{theorem}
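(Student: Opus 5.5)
Suppose for contradiction that $I\neq0$, and put $m=d+2\geq2$. Proposition~\ref{prop:vanishing-transfer} then gives $\Ext_A^j(C,C)=0$ for $1\leq j\leq d+1$. So the dual complex \eqref{eq:dual-complex} of projective left $C$-modules is exact at $Q_1,\dots,Q_{d+1}$. At $Q_0$ the cohomology is $\Hom_A(C,C)\cong C$, consistent with $\delta_0=0$.

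The plan is to use this exactness to build a finitely generated left $C$-module of projective dimension exactly $d+1$, which contradicts $d=\findim(C^{\rm op})$. Consider the truncated complex
$$
0\longrightarrow Q_1\xrightarrow{\delta_1}Q_2\longrightarrow\cdots\longrightarrow Q_{d+1}\xrightarrow{\delta_{d+1}}Q_{d+2},
$$
and set $X=\operatorname{Coker}\delta_{d+1}$. Since $\delta_0=0$, the vanishing of $\Ext_A^1(C,C)$ says that $\delta_1$ is injective. Together with exactness at $Q_2,\dots,Q_{d+1}$, this shows that
$$
0\to Q_1\to Q_2\to\cdots\to Q_{d+2}\to X\to0
$$
is a projective resolution of $X$ of length $d+1$. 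In particular $\pd_{C^{\rm op}}X\leq d+1<\infty$.

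It remains to show that this resolution is minimal and has $Q_1\neq0$, so that $\pd X=d+1$ exactly. First, $Q_1\neq0$: the hypothesis $I\neq0$ means $\partial_1\neq0$, so $P_1\neq0$, and $\Hom_A(P_1,C)\neq0$ because $C$ is sincere (since $I\subseteq\rad A$, every simple $A$-module is a $C$-module). Next, every $\delta_i$ with $i\geq1$ is radical by Lemma~\ref{lem:dual-radical}, with image inside the radical of its target. A finite projective resolution in which all differentials are radical is minimal. Then $\Ext^{d+1}_{C^{\rm op}}(X,S)\cong\Hom(Q_1,S)\neq0$ for any simple top $S$ of $Q_1$, so $\pd X=d+1$.

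Another route to the same conclusion is to suppose $\pd X\leq d$. Then $\delta_1:Q_1\to Q_2$ would be a split monomorphism, and dualizing gives a radical epimorphism $\Hom_{C}(Q_2,C)\to\Hom_C(Q_1,C)$ onto a nonzero projective, contradicting Lemma~\ref{lem:radical-epi}. Either way we get $d+1\leq\findim(C^{\rm op})=d$, a contradiction, so $I=0$. Proposition~\ref{prop:quotient-tilting} then says $T$ is classical $1$-tilting.

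The main obstacle is the careful bookkeeping at the start of the complex. One must check that exactness at $Q_1$ is exactly injectivity of $\delta_1$, which uses $\delta_0=0$. One must also confirm that the range of vanishing provided by Proposition~\ref{prop:vanishing-transfer}, namely $j\leq m-1=d+1$, is exactly what is needed, which is why the hypothesis reads $d+2$. The minimality argument via Lemmas~\ref{lem:dual-radical} and~\ref{lem:radical-epi} is standard once this is in place.
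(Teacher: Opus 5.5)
Your proof is correct and follows essentially the same route as the paper. You transfer the vanishing to $\Ext_A^j(C,C)=0$ for $1\le j\le d+1$, dualize the minimal $A$-projective resolution of $C$ into $C$, and obtain a left $C$-module $X=\operatorname{Coker}\delta_{d+1}$ with minimal resolution $0\to Q_1\to\cdots\to Q_{d+2}\to X\to0$ of length $d+1$. The only difference is minor: the paper first proves by induction that $Q_i\neq0$ for $1\le i\le d+2$ and that $X\neq0$, whereas you get $\pd X=d+1$ directly from $\Ext^{d+1}(X,S)\cong\Hom(Q_1,S)\neq0$, which makes those nonvanishing checks unnecessary.
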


\begin{proof}
Suppose that $I\neq0$. By Proposition~\ref{prop:vanishing-transfer},
$$
\Ext_A^i(C,C)=0\quad(1\leq i\leq d+1).
$$
Thus \eqref{eq:dual-complex} is exact at $Q_i$ for $1\leq i\leq d+1$.

We claim that
\begin{equation}\label{eq:Qi-nonzero}
Q_i\neq0\quad(1\leq i\leq d+2).
\end{equation}
Since $I\neq0$, the projective module $P_1$ is nonzero. Let $eA$ be a nonzero indecomposable projective summand of $P_1$. The idempotent $e$ does not belong to $I$ because $I\subseteq\rad A$. Hence
$$
\Hom_A(eA,C)\cong Ce\neq0.
$$
It follows that $Q_1\neq0$.

Assume that $Q_r\neq0$ for some $1\leq r\leq d+1$. If $Q_{r+1}=0$, exactness at $Q_r$ implies that the preceding map onto $Q_r$ is an epimorphism. For $r=1$, the preceding map is $\delta_0=0$, which is impossible. For $r\geq2$, the preceding map is radical. We apply Lemma~\ref{lem:radical-epi} to $C^{\rm op}$ and obtain a contradiction. This proves \eqref{eq:Qi-nonzero}.

Set
$$
X=\operatorname{Coker}(\delta_{d+1}:Q_{d+1}\to Q_{d+2}).
$$
By \eqref{eq:Qi-nonzero}, $Q_{d+2}\neq0$. The map $\delta_{d+1}$ is radical. Applying Lemma~\ref{lem:radical-epi} to $C^{\rm op}$ shows that it is not an epimorphism. Hence $X\neq0$. Exactness gives
\begin{equation}\label{eq:X-resolution}
0\longrightarrow Q_1\longrightarrow Q_2\longrightarrow\cdots
\longrightarrow Q_{d+1}\longrightarrow Q_{d+2}\longrightarrow X\longrightarrow0.
\end{equation}
Every differential between the projective terms is radical. The kernel of $Q_{d+2}\to X$ is contained in $\rad Q_{d+2}$. Hence this epimorphism is a projective cover. Therefore \eqref{eq:X-resolution} is the minimal projective resolution of $X$. Since $Q_1\neq0$, we obtain
$$
\pd_{{}_C}X=d+1.
$$
Here $X$ is a finitely generated left $C$-module. Equivalently, it is a finitely generated right $C^{\rm op}$-module. This contradicts $d=\findim(C^{\rm op})$.

Therefore $I=0$. The conclusion now follows from Proposition~\ref{prop:quotient-tilting}.
\end{proof}

\begin{corollary}\label{cor:selforthogonal}
Let $T$ be a self-orthogonal $\tau$-tilting right $A$-module and put $C=A/\Ann_A(T)$. If
$$
\findim(C^{\rm op})<\infty,
$$
then $T$ is a classical $1$-tilting right $A$-module.
\end{corollary}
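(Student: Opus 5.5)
This is a direct consequence of Theorem~\ref{thm:main}, so the plan is simply to check that its hypotheses hold.

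Put $d=\findim(C^{\rm op})$, which is finite by assumption. Since $T$ is self-orthogonal, we have $\Ext_A^i(T,T)=0$ for all $i\geq1$. In particular this holds in the finite range $1\leq i\leq d+2$. These are exactly the hypotheses of Theorem~\ref{thm:main}, which then gives $\Ann_A(T)=0$.

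By Proposition~\ref{prop:quotient-tilting}, $\Ann_A(T)=0$ means that $T$ is a classical $1$-tilting right $A$-module. This is the required conclusion.

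There is no real obstacle here. The only thing to confirm is that the integer $d$ in Theorem~\ref{thm:main} is allowed to be any value of $\findim(C^{\rm op})$, including $d=0$. In that case the theorem needs vanishing only for $1\leq i\leq 2$. This still meets the requirement $m\geq2$ used when Proposition~\ref{prop:vanishing-transfer} is invoked inside the theorem's proof, so the argument goes through for every finite $d$.
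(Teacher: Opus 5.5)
Your proposal is correct and matches the paper's proof, which likewise deduces the corollary immediately from Theorem~\ref{thm:main} with $d=\findim(C^{\rm op})$, since self-orthogonality gives $\Ext_A^i(T,T)=0$ for $1\leq i\leq d+2$. Your check that $m=d+2\geq2$ keeps Proposition~\ref{prop:vanishing-transfer} applicable even when $d=0$ is accurate, though Theorem~\ref{thm:main} already covers that case.
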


\begin{proof}
This follows immediately from Theorem~\ref{thm:main}.
\end{proof}

The same argument gives a stronger obstruction when $T$ is self-orthogonal.

\begin{corollary}\label{cor:counterexample}
Let $T$ be a self-orthogonal $\tau$-tilting right $A$-module which is not a classical $1$-tilting module. Put
$$
C=A/\Ann_A(T).
$$
Then
$$
\findim(C^{\rm op})=\infty.
$$
For every $n\geq1$, there is a finitely generated left $C$-module $X_n$ with
$$
\pd_{{}_C}X_n=n.
$$
\end{corollary}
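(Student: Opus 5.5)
The plan is to rerun the argument of Theorem~\ref{thm:main} with the self-orthogonality hypothesis in force, and to cut the dual complex at every degree rather than only at $d+2$. Since $T$ is self-orthogonal and not classical $1$-tilting, Proposition~\ref{prop:quotient-tilting} gives $I=\Ann_A(T)\neq0$, and Proposition~\ref{prop:vanishing-transfer} gives $\Ext_A^j(C,C)=0$ for all $j\geq1$. Take the minimal projective resolution \eqref{eq:minimal-resolution} of $C_A$ and the dual complex \eqref{eq:dual-complex} of projective left $C$-modules $Q_i=\Hom_A(P_i,C)$. This complex is then exact at every $Q_i$ with $i\geq1$. We already know that $\delta_0=0$ and that each $\delta_i$ with $i\geq1$ is radical, by Lemma~\ref{lem:dual-radical}.

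The first step is to show that $Q_i\neq0$ for all $i\geq1$, by the induction used for \eqref{eq:Qi-nonzero}, now with no upper bound on $r$. For the base case, $Q_1\neq0$ because $P_1\neq0$ (as $I\neq0$) and $Ce\neq0$ for any primitive idempotent $e$, since $I\subseteq\rad A$. For the inductive step, suppose $Q_r\neq0$ but $Q_{r+1}=0$. Exactness at $Q_r$ then forces the incoming map to be surjective. This is impossible: for $r=1$ the incoming map is $\delta_0=0$, and for $r\geq2$ it is a radical map onto a nonzero projective, which Lemma~\ref{lem:radical-epi} (applied to $C^{\rm op}$) rules out.

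The second step is, for each $n\geq1$, to set $X_n=\operatorname{Coker}(\delta_n:Q_n\to Q_{n+1})$. Exactness at $Q_1,\dots,Q_n$, together with injectivity of $\delta_1$ (which holds because $\ker\delta_1$ is the cohomology at $Q_1$ and $\delta_0=0$), yields an exact sequence
$$
0\longrightarrow Q_1\longrightarrow Q_2\longrightarrow\cdots\longrightarrow Q_{n+1}\longrightarrow X_n\longrightarrow0 .
$$
All of its differentials are radical, and by Lemma~\ref{lem:dual-radical} the image of $\delta_n$ lies in $\rad Q_{n+1}$. So $Q_{n+1}\to X_n$ is a projective cover and the sequence is a minimal projective resolution. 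Since $Q_1\neq0$, this gives $\pd_{{}_C}X_n=n$. As this holds for every $n\geq1$, we conclude $\findim(C^{\rm op})=\infty$.

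The only point needing care is the minimality of the resolution, that is, showing that the length cannot drop. Minimal resolutions are unique up to isomorphism, so the nonvanishing of the leftmost term $Q_1$ pins $\pd X_n$ at exactly $n$. This relies on all differentials being radical, which Lemma~\ref{lem:dual-radical} supplies. Everything else is a direct repetition of the proof of Theorem~\ref{thm:main}.
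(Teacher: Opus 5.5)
Your proposal is correct and follows the paper's proof essentially step for step. You obtain $I\neq0$ and $\Ext_A^j(C,C)=0$ for all $j\geq1$, run the nonvanishing induction for the $Q_i$ with no upper bound, and set $X_n=\operatorname{Coker}(\delta_n)$, whose resolution $0\to Q_1\to\cdots\to Q_{n+1}\to X_n\to0$ is minimal by Lemma~\ref{lem:dual-radical}, so $\pd_{{}_C}X_n=n$ (and you are slightly more explicit than the paper in deriving injectivity of $\delta_1$ from $\delta_0=0$ and $\Ext_A^1(C,C)=0$).
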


\begin{proof}
Since $T$ is self-orthogonal, Proposition~\ref{prop:vanishing-transfer} gives $\Ext_A^i(C,C)=0$ for all $i\geq1$. Since $T$ is not tilting, Proposition~\ref{prop:quotient-tilting} gives $I=\Ann_A(T)\neq0$. Thus the argument in the proof of Theorem~\ref{thm:main} works in every degree.

For $n\geq1$, set
$$
X_n=\operatorname{Coker}(Q_n\longrightarrow Q_{n+1}).
$$
The exact sequence
$$
0\longrightarrow Q_1\longrightarrow Q_2\longrightarrow\cdots
\longrightarrow Q_{n+1}\longrightarrow X_n\longrightarrow0
$$
is a minimal projective resolution. Hence $\pd_{{}_C}X_n=n$.
\end{proof}

\begin{corollary}\label{cor:FDC}
If the little finitistic dimension conjecture holds for all Artin algebras, then every self-orthogonal $\tau$-tilting right $A$-module is a classical $1$-tilting right $A$-module, that is, the self-orthogonal $\tau$-tilting conjecture holds for all Artin algebras.
\end{corollary}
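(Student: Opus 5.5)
The plan is to deduce the statement directly from Corollary~\ref{cor:selforthogonal}, applying it to the particular quotient algebra attached to $T$. Let $A$ be any Artin algebra and let $T$ be a self-orthogonal $\tau$-tilting right $A$-module. Put $I=\Ann_A(T)$ and $C=A/I$.

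The first step is to check that $C^{\rm op}$ is again an Artin algebra. Since $A$ is an Artin $R$-algebra and $I$ is a two-sided ideal, $C$ is an $R$-algebra that is finitely generated as an $R$-module. The same holds for its opposite algebra $C^{\rm op}$. Hence $C^{\rm op}$ lies in the class of algebras covered by the hypothesis.

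By hypothesis, the little finitistic dimension conjecture holds for all Artin algebras. Applying it to $C^{\rm op}$ gives
$$
\findim(C^{\rm op})<\infty,
$$
where $\findim$ is understood as in Definition~\ref{def:findim}. Corollary~\ref{cor:selforthogonal} then shows that $T$ is a classical $1$-tilting right $A$-module. Since $A$ and $T$ were arbitrary, the self-orthogonal $\tau$-tilting conjecture holds for all Artin algebras.

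There is no real obstacle in this argument. The only point needing care is that the conjecture has to be applied to $C^{\rm op}$, not to $A$. This is legitimate because the hypothesis is quantified over all Artin algebras, and the class of Artin algebras is closed under taking quotients and opposites. An equivalent contrapositive form of the argument uses Corollary~\ref{cor:counterexample}: a counterexample $T$ would make $C^{\rm op}$ an Artin algebra with infinite little finitistic dimension, contradicting the hypothesis.
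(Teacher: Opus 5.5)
Your proposal is correct and matches the paper's proof: $C=A/\Ann_A(T)$, and hence $C^{\rm op}$, is again an Artin algebra, so the conjecture gives $\findim(C^{\rm op})<\infty$, and Corollary~\ref{cor:selforthogonal} then applies. Your explicit remark that the conjecture must be applied to $C^{\rm op}$ rather than to $A$ usefully spells out a step the paper states only briefly.
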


\begin{proof}
For a self-orthogonal $\tau$-tilting right $A$-module $T$, the quotient $C=A/\Ann_A(T)$ is again an Artin algebra. The finitistic dimension conjecture gives $\findim(C^{\rm op})<\infty$. Corollary~\ref{cor:selforthogonal} applies.
\end{proof}

The global implication from the finitistic dimension conjecture to the Auslander--Reiten conjecture is known, see Wei \cite{Wei}. Corollary~\ref{cor:FDC} identifies a more specific obstruction in the present setting. A counterexample forces the canonical quotient $A/\Ann_A(T)$ itself to have infinite little finitistic dimension.

\begin{remark}\label{rem:comparison}
Corollaries~\ref{cor:selforthogonal} and \ref{cor:counterexample} give two complementary readings of the same argument. On the positive side, finite little finitistic dimension of the annihilator quotient forces a self-orthogonal $\tau$-tilting module to be faithful. On the negative side, a genuine counterexample forces the opposite quotient to have infinite little finitistic dimension. More is true in the negative direction because the proof constructs modules of every positive finite projective dimension.

This obstruction is different from the one in \cite{ZhangII}, which involves the endomorphism algebra $\End_A(T)^{\rm op}$. It is also different from the delooping criterion in \cite[Theorem~5.4]{CLZZ}. Our criterion is formulated entirely in terms of $A/\Ann_A(T)$. None of these conditions is claimed to imply the others in general.
\end{remark}

\section{Applications}

In this section, we give some applications of our main theorem. We first apply the main theorem to radical cube zero algebras.

\begin{theorem}\label{thm:radical-cube-zero}
Let $T$ be a self-orthogonal $\tau$-tilting right $A$-module and put
$$
C=A/\Ann_A(T).
$$
If
$$
\rad^3 C=0,
$$
then $T$ is a classical $1$-tilting right $A$-module.
\end{theorem}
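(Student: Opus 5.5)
The plan is to reduce the statement to Corollary~\ref{cor:selforthogonal}. That corollary shows that a self-orthogonal $\tau$-tilting module $T$ is classical $1$-tilting as soon as
$$
\findim(C^{\rm op})<\infty
$$
for $C=A/\Ann_A(T)$. So the only thing to establish is that the opposite algebra $C^{\rm op}$ of the radical cube zero algebra $C$ has finite little finitistic dimension.

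First I pass from $C$ to $C^{\rm op}$. The Jacobson radical of an Artin algebra is a two-sided ideal, and it coincides for the algebra and its opposite: $\rad(C^{\rm op})=(\rad C)^{\rm op}$ as subsets. Powers of an ideal are computed by products, and reversing the multiplication does not change whether a product of three radical elements vanishes. Hence $\rad^3 C=0$ implies $\rad^3(C^{\rm op})=0$. Also, $C^{\rm op}$ is again an Artin $R$-algebra, hence a (left and right) Artinian ring.

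Next I invoke the theorem of Green and Zimmermann-Huisgen \cite{GreenHuisgen}. It states that an Artinian ring $\Lambda$ with $(\rad\Lambda)^3=0$ has finite little finitistic dimension; their statement is side-independent, and in fact gives an explicit bound in terms of the number of simple modules. Applying it to $\Lambda=C^{\rm op}$ yields $\findim(C^{\rm op})<\infty$, where this is computed on finitely generated right $C^{\rm op}$-modules, that is, on finitely generated left $C$-modules as in Definition~\ref{def:findim}.

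Finally, Corollary~\ref{cor:selforthogonal}, or directly Theorem~\ref{thm:main} with $d=\findim(C^{\rm op})$, gives $\Ann_A(T)=0$, so $T$ is classical $1$-tilting. As an immediate consequence, Theorem~C follows: if $\rad^3A=0$, then $\rad C=(\rad A)/I$ by Proposition~\ref{prop:quotient-tilting}, so $\rad^3C=0$.

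The only delicate point is bookkeeping about sides. I must make sure the cited result of Green and Zimmermann-Huisgen applies to the finitely generated modules over $C^{\rm op}$ that appear in the proof of Theorem~\ref{thm:main}. I handle this by the symmetry observation above, namely that the radical cube zero hypothesis is invariant under passing to the opposite ring, so whichever side their theorem is phrased for can be applied to $C^{\rm op}$.
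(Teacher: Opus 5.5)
Your proposal is correct and follows the paper's proof: note that $\rad^3(C^{\rm op})=0$, apply Green and Zimmermann-Huisgen to get $\findim(C^{\rm op})<\infty$, and conclude with Corollary~\ref{cor:selforthogonal}. Your handling of the left/right sides and your deduction of Theorem~C also match the paper's argument in Corollary~\ref{cor:radical-cube-zero}.
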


\begin{proof}
The opposite algebra also satisfies $\rad^3(C^{\rm op})=0$. By Green and Zimmermann-Huisgen \cite{GreenHuisgen},
$$
\findim(C^{\rm op})<\infty.
$$
The conclusion follows from Corollary~\ref{cor:selforthogonal}.
\end{proof}

\begin{corollary}\label{cor:radical-cube-zero}
Let $A$ be an Artin algebra with $\rad^3 A=0$. Then every self-orthogonal $\tau$-tilting right $A$-module is a classical $1$-tilting right $A$-module, that is, the self-orthogonal $\tau$-tilting conjecture holds.
\end{corollary}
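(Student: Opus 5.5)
The plan is to reduce Corollary~\ref{cor:radical-cube-zero} to Theorem~\ref{thm:radical-cube-zero}. Let $T$ be a self-orthogonal $\tau$-tilting right $A$-module, and put $I=\Ann_A(T)$ and $C=A/I$. By Theorem~\ref{thm:radical-cube-zero}, it suffices to check that $\rad^3 C=0$.

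By Proposition~\ref{prop:quotient-tilting}, we have $I\subseteq\rad A$, and hence $\rad C=(\rad A)/I$. I would use this description of the radical directly. Any product of three elements of $\rad C$ is represented by a product of three elements of $\rad A$, and such a product lies in $\rad^3 A=0$. Therefore $\rad^3 C=(\rad^3 A+I)/I=0$.

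In fact the inclusion $I\subseteq\rad A$ is not even essential. For any ideal $I$ of an Artin algebra, the surjection $A\to A/I$ maps $\rad A$ onto $\rad(A/I)$. This holds because the image of $\rad A$ is a nilpotent ideal, and the quotient of $A/I$ by this image is a quotient of the semisimple algebra $A/\rad A$, hence semisimple. So the cube vanishes in general. I would still cite Proposition~\ref{prop:quotient-tilting}, since it gives the cleanest formula.

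With $\rad^3 C=0$ established, Theorem~\ref{thm:radical-cube-zero} shows that $T$ is a classical $1$-tilting right $A$-module. That theorem in turn rests on the result of Green and Zimmermann-Huisgen applied to $C^{\rm op}$, together with Corollary~\ref{cor:selforthogonal}. There is no real obstacle here; the only point needing care is the identification of $\rad C$ with the image of $\rad A$, and that is already supplied by Proposition~\ref{prop:quotient-tilting}.
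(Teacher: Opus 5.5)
Your proposal is correct and follows the paper's proof: use $I\subseteq\rad A$ from Proposition~\ref{prop:quotient-tilting} to get $\rad C=(\rad A)/I$, hence $\rad^3 C=0$, and then apply Theorem~\ref{thm:radical-cube-zero}. Your extra remark is also right: $\rad(A/I)=(\rad A+I)/I$ holds for any ideal $I$ of an Artin algebra, so the inclusion $I\subseteq\rad A$ is not actually needed.
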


\begin{proof}
Let $I=\Ann_A(T)$ and $C=A/I$. Since $I\subseteq\rad A$, we have
$$
\rad C=(\rad A)/I.
$$
Hence $\rad^3 C=0$. The result follows from Theorem~\ref{thm:radical-cube-zero}.
\end{proof}

\begin{remark}\label{rem:radical-square}
Zhang \cite{ZhangII} proved the radical square zero case by annihilator separation. Theorem~\ref{thm:radical-cube-zero} extends that positive result to radical cube zero algebras. The proof is quite different. It uses the minimal $A$-projective resolution of the annihilator quotient and the theorem of Green and Zimmermann-Huisgen \cite{GreenHuisgen}.
\end{remark}

The next example illustrates Theorem~\ref{thm:radical-cube-zero} on an algebra which does not belong to the positive classes recalled in the introduction.

\begin{example}[A connected radical cube zero algebra]
\label{ex:cube-rep-infinite}
Let $k$ be an infinite field. Let $Q$ be the quiver with two vertices, one arrow $\alpha:1\to2$, and two loops $x,y$ at vertex $2$. Let $J$ be the arrow ideal of $kQ$. Put
$$
A=kQ/(J^3,xy-yx).
$$
Then $A$ is connected and nonlocal. Moreover,
$$
(\rad A)^3=0
\quad\text{and}\quad
(\rad A)^2\neq0.
$$
Hence Corollary~\ref{cor:radical-cube-zero} shows that every self-orthogonal $\tau$-tilting right $A$-module is a classical $1$-tilting right $A$-module.

(1) We first show that $A$ is representation-infinite.

Let $e_1$ be the idempotent at the first vertex. Then
$$
R=A/Ae_1A\cong k[x,y]/(x,y)^3.
$$
The quotient $R/(x,y)^2$ is $k[x,y]/(x,y)^2$. For each $\lambda\in k$, let $M_\lambda$ be the two-dimensional right $R/(x,y)^2$-module with basis $u_\lambda,v_\lambda$ and action
$$
u_\lambda x=v_\lambda,\quad
u_\lambda y=\lambda v_\lambda,\quad
v_\lambda x=v_\lambda y=0.
$$
Each $M_\lambda$ has simple top, and hence it is indecomposable. A direct comparison of the actions of $x$ and $y$ gives
$$
M_\lambda\simeq M_\mu
\quad\Longrightarrow\quad
\lambda=\mu.
$$
Thus $R$, and hence $A$, is representation-infinite. Since $R$ is a proper representation-infinite quotient of $A$, the algebra $A$ is not minimal representation-infinite.

(2) We next show that $A$ is not Iwanaga--Gorenstein.

The socle of the commutative local algebra $R$ is
$$
\operatorname{soc}R=(x,y)^2,
$$
which has dimension $3$ over $k$. By Bruns and Herzog \cite[Theorem~3.2.10]{BrunsHerzog}, a zero-dimensional commutative local ring is Gorenstein if and only if it has type one. Equivalently, its socle is one-dimensional over the residue field. Thus $R$ is not Gorenstein. Hence
$$
\operatorname{id}_R R=\infty.
$$
As a right $A$-module, $e_2A$ is naturally identified with $R$. Hence every finitely generated projective right $R$-module is projective as a right $A$-module. A projective $R$-resolution of a right $R$-module $X$, viewed by restriction of scalars along $A\to R$, is therefore also an $A$-projective resolution. It follows that
$$
\Ext_A^n(X,e_2A)\cong\Ext_R^n(X,R)
$$
for every finitely generated right $R$-module $X$ and every $n\geq0$. Since $A_A=e_1A\oplus e_2A$, we obtain
$$
\operatorname{id}_A A=\infty.
$$
Thus $A$ is not Iwanaga--Gorenstein.

In particular, this example is outside the local, syzygy-finite Gorenstein, representation-finite, minimal representation-infinite, and radical square zero cases recalled in the introduction.
\end{example}

The quotient formulation also recovers a familiar finite case in a slightly more flexible form.

\begin{corollary}\label{cor:representation-finite}
Let $T$ be a self-orthogonal $\tau$-tilting right $A$-module and put $C=A/\Ann_A(T)$. If $C$ is representation-finite, then $T$ is a classical $1$-tilting right $A$-module.
\end{corollary}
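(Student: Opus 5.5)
The plan is to reduce Corollary~\ref{cor:representation-finite} to Corollary~\ref{cor:selforthogonal}. The whole argument comes down to checking that $\findim(C^{\rm op})<\infty$ whenever $C$ is representation-finite.

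First I will pass from $C$ to $C^{\rm op}$. The standard duality $D$ is a contravariant equivalence between finitely generated right $C$-modules and finitely generated left $C$-modules, that is, right $C^{\rm op}$-modules. It preserves indecomposability and reflects isomorphism classes. Hence $C^{\rm op}$ has only finitely many indecomposable modules up to isomorphism, so $C^{\rm op}$ is representation-finite as well.

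Next I will show that a representation-finite Artin algebra $\Gamma=C^{\rm op}$ has finite little finitistic dimension. Let $M_1,\dots,M_s$ be the indecomposable $\Gamma$-modules of finite projective dimension, and put $N=\max_j \pd_\Gamma M_j$; set $N=0$ if there are none besides projectives. This maximum is finite because the list is finite. Now take any finitely generated $X$ with $\pd_\Gamma X<\infty$. Projective dimension is additive over direct sums in the sense that
$$
\pd\Bigl(\bigoplus X_k\Bigr)=\max_k \pd X_k.
$$
So every indecomposable summand of $X$ has finite projective dimension and is isomorphic to some $M_j$. Hence $\pd_\Gamma X\le N$, and therefore $\findim(C^{\rm op})\le N<\infty$.

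Finally, $T$ is a self-orthogonal $\tau$-tilting module and $\findim(C^{\rm op})<\infty$, so Corollary~\ref{cor:selforthogonal} shows that $T$ is a classical $1$-tilting right $A$-module. None of these steps is a real obstacle. The only point that needs care is the transfer of representation-finiteness from $C$ to $C^{\rm op}$ via $D$, because the main theorem is stated for left $C$-modules.
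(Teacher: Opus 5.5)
Your proposal is correct and follows essentially the same route as the paper. Both arguments transfer representation-finiteness from $C$ to $C^{\rm op}$ via the standard duality $D$, bound finite projective dimensions uniformly over the finitely many indecomposable left $C$-modules to get $\findim(C^{\rm op})<\infty$, and then apply Corollary~\ref{cor:selforthogonal}; you additionally spell out the step $\pd\bigl(\bigoplus X_k\bigr)=\max_k \pd X_k$, which the paper leaves implicit.
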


\begin{proof}
The standard duality for Artin algebras identifies finite representation type on the right and on the left \cite{ARS}. Hence there are only finitely many isomorphism classes of indecomposable finitely generated left $C$-modules. Their finite projective dimensions are bounded. Thus $\findim(C^{\rm op})<\infty$, and Corollary~\ref{cor:selforthogonal} applies.
\end{proof}

\begin{remark}\label{rem:repfinite}
When $A$ itself is representation-finite, Corollary~\ref{cor:representation-finite} recovers the known positive answer from \cite[Proposition~5.3]{CLZZ}. The quotient formulation is slightly more flexible. It only asks that the particular algebra $A/\Ann_A(T)$ attached to $T$ be representation-finite.
\end{remark}

The finite version of Theorem~\ref{thm:main} is useful when an explicit bound is available.

\begin{proposition}\label{prop:finite-test}
Let $T$ be a $\tau$-tilting right $A$-module and put $C=A/\Ann_A(T)$. Assume that
$$
\findim(C^{\rm op})\leq d.
$$
If
$$
\Ext_A^i(T,T)=0\quad(1\leq i\leq d+2),
$$
then $T$ is a classical $1$-tilting right $A$-module.
\end{proposition}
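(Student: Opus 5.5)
Proposition~\ref{prop:finite-test} reduces to Theorem~\ref{thm:main}; the only point to check is the change from an equality to an upper bound. Put $d'=\findim(C^{\rm op})$. By hypothesis $d'\leq d<\infty$. Since $d'+2\leq d+2$, the assumed vanishing $\Ext_A^i(T,T)=0$ for $1\leq i\leq d+2$ contains the vanishing for $1\leq i\leq d'+2$. Theorem~\ref{thm:main}, applied with $d'$ in place of $d$, then gives $\Ann_A(T)=0$. Proposition~\ref{prop:quotient-tilting} shows that $T$ is classical $1$-tilting.

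If one prefers not to cite the theorem with a shifted constant, the proof of Theorem~\ref{thm:main} can be rerun directly with the bound $d$. Suppose $I=\Ann_A(T)\neq0$. Proposition~\ref{prop:vanishing-transfer} with $m=d+2\geq2$ gives $\Ext_A^j(C,C)=0$ for $1\leq j\leq d+1$. Hence the dual complex \eqref{eq:dual-complex} is exact at $Q_1,\dots,Q_{d+1}$.

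The nonvanishing argument then shows $Q_i\neq0$ for $1\leq i\leq d+2$. It uses $\delta_0=0$, Lemma~\ref{lem:dual-radical}, and Lemma~\ref{lem:radical-epi} over $C^{\rm op}$. The cokernel $X$ of $\delta_{d+1}$ has the minimal projective resolution \eqref{eq:X-resolution}, so $\pd_{{}_C}X=d+1>d\geq\findim(C^{\rm op})$. This contradicts the assumed bound, so $I=0$.

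There is no real obstacle. The one thing to verify is that nothing in the proof of Theorem~\ref{thm:main} used the equality $d=\findim(C^{\rm op})$ beyond the inequality $\pd X\leq d$ for modules of finite projective dimension. Indeed, only that inequality is used.
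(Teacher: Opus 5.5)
Your proposal is correct and matches the paper's proof: the paper also applies Theorem~\ref{thm:main} with the actual value $\findim(C^{\rm op})\leq d$, since the assumed vanishing range $1\leq i\leq d+2$ contains the range required there. Your optional direct rerun with the bound $d$ is also sound, because the proof of Theorem~\ref{thm:main} uses only the inequality $\pd_{{}_C}X\leq d$ for the constructed module $X$.
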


\begin{proof}
Apply Theorem~\ref{thm:main} with $\findim(C^{\rm op})\leq d$. If the actual finitistic dimension is smaller than $d$, the assumed vanishing contains the range needed in Theorem~\ref{thm:main}.
\end{proof}

\begin{corollary}
\label{cor:finite-range-test}
Let $T$ be a $\tau$-tilting right $A$-module and put $C=A/\Ann_A(T)$.
\begin{enumerate}[\rm(i)]
\item If $C$ is hereditary and
$$
\Ext_A^i(T,T)=0\quad(1\leq i\leq3),
$$
then $T$ is a classical $1$-tilting right $A$-module.
\item If $C$ is semisimple and
$$
\Ext_A^1(T,T)=0=\Ext_A^2(T,T),
$$
then $T$ is a classical $1$-tilting right $A$-module.
\end{enumerate}
\end{corollary}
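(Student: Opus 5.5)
Both parts follow from Proposition~\ref{prop:finite-test} once we bound $\findim(C^{\rm op})$ by an explicit $d$: $d=1$ in case (i) and $d=0$ in case (ii). Then $d+2$ equals $3$ and $2$ respectively, which are exactly the ranges of vanishing self-extensions assumed in (i) and (ii).

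For (i), suppose $C$ is hereditary. Then $C^{\rm op}$ is hereditary as well. One way to see this is that the standard duality $D$ interchanges projective left $C$-modules with injective right $C$-modules. The right global dimension of $C$ equals the injective dimension bound for right modules, and this in turn equals the projective dimension bound for left modules. For Artin algebras, left and right global dimensions agree. Hence every finitely generated left $C$-module has projective dimension at most $1$, so $\findim(C^{\rm op})\leq\operatorname{gl.dim}C^{\rm op}\leq1$. Proposition~\ref{prop:finite-test} with $d=1$ then requires $\Ext_A^i(T,T)=0$ for $1\leq i\leq3$, which is the hypothesis.

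For (ii), suppose $C$ is semisimple. Then $C^{\rm op}$ is semisimple, since $\rad C^{\rm op}=\rad C=0$. So every left $C$-module is projective and $\findim(C^{\rm op})=0$. Proposition~\ref{prop:finite-test} with $d=0$ requires vanishing of $\Ext_A^1(T,T)$ and $\Ext_A^2(T,T)$, which is the hypothesis. In both cases the conclusion is that $T$ is a classical $1$-tilting right $A$-module.

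The only step needing care is the left–right symmetry of global dimension for Artin algebras, which is standard (see \cite{ARS}). Nothing else requires more than citing Proposition~\ref{prop:finite-test}.
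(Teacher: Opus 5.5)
Your proposal is correct and matches the paper's proof: bound $\findim(C^{\rm op})$ by $1$ in the hereditary case and by $0$ in the semisimple case, then invoke Proposition~\ref{prop:finite-test} with $d=1$ or $d=0$. Your justification that $C^{\rm op}$ is again hereditary, via the duality $D$ and the left--right symmetry of global dimension for Artin algebras, supplies a detail the paper leaves implicit.
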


\begin{proof}
If $C$ is hereditary, then $\findim(C^{\rm op})\leq1$. If $C$ is semisimple, then $\findim(C^{\rm op})=0$. Both assertions follow from Proposition~\ref{prop:finite-test}.
\end{proof}

This finite form shows that full self-orthogonality is not required when a small finitistic dimension bound is known.

The quantitative form also excludes prescribed nonzero annihilators. We use the same algebra as in Example~\ref{ex:cube-rep-infinite}.

\begin{example}[A prescribed annihilator is excluded]
\label{ex:excluded-annihilator}
Let $A$ be the algebra in Example~\ref{ex:cube-rep-infinite} and put
$$
I=\rad A.
$$
Since $A$ has two simple modules,
$$
C=A/I\cong k\times k
$$
is semisimple. Hence $\findim(C^{\rm op})=0$.

Suppose that a $\tau$-tilting right $A$-module $T$ satisfies
$$
\Ann_A(T)=I
$$
and
$$
\Ext_A^1(T,T)=0=\Ext_A^2(T,T).
$$
Theorem~\ref{thm:main} with $d=0$ gives $\Ann_A(T)=0$, contrary to $I\neq0$. Therefore no such $T$ exists. This illustrates the finite form of our criterion. On this algebra, the first two self-extension groups already exclude $\rad A$ as the annihilator of a $\tau$-tilting module.
\end{example}

\begin{remark}\label{rem:final}
The main theorem narrows the search for a counterexample. A nonzero annihilator alone is not enough. The corresponding quotient must have infinite little finitistic dimension. In fact, it must contain finitely generated left modules of every positive finite projective dimension. This is only a necessary condition. The existence of such a quotient does not imply the existence of a self-orthogonal $\tau$-tilting module with nonzero annihilator.
\end{remark}

There is no known Artin algebra which can serve as a concrete example for the conclusion $\findim(C^{\rm op})=\infty$ in Corollary~\ref{cor:counterexample}. The finitistic dimension conjecture remains open for Artin algebras. See, for example, \cite{Giatagantzidis}. Thus the negative part of our theorem should be read as a reduction. Any counterexample to the self-orthogonal $\tau$-tilting conjecture would at the same time give a counterexample to the finitistic dimension conjecture.

\vspace{6mm}

\textbf{Acknowledgements.} Xiaojin Zhang is supported by the National Natural Science Foundation of China (Grant Nos. 12171207 and 12371038). Panyue Zhou is supported by the National Natural Science Foundation of China (Grant No. ~12371034).

\vspace{4mm}

\hspace{-4mm}\textbf{Data Availability}\hspace{2mm} Data sharing not applicable to this article as no datasets were generated or analysed during
the current study.
\vspace{2mm}

\hspace{-4mm}\textbf{Conflict of Interests}\hspace{2mm} The authors declare that they have no conflicts of interest to this work.


\vspace{4mm}


\begin{thebibliography}{CLZZ}

\bibitem[AIR]{AIR}
T.~Adachi, O.~Iyama and I.~Reiten,
\emph{$\tau$-tilting theory},
Compos. Math. \textbf{150} (2014), no.~3, 415--452.

\bibitem[ARS]{ARS}
M.~Auslander, I.~Reiten and S.~O.~Smal\o,
\emph{Representation Theory of Artin Algebras},
Cambridge Studies in Advanced Mathematics, Vol.~36, Cambridge University Press, Cambridge, 1995.

\bibitem[AuR]{AuslanderReiten}
M.~Auslander and I.~Reiten,
\emph{On a generalized version of the Nakayama conjecture},
Proc. Amer. Math. Soc. \textbf{52} (1975), 69--74.

\bibitem[B]{Bass}
H.~Bass,
\emph{Finitistic dimension and a homological generalization of semi-primary rings},
Trans. Amer. Math. Soc. \textbf{95} (1960), 466--488.

\bibitem[Bo]{Bongartz}
K.~Bongartz,
\emph{Tilted algebras},
in \emph{Representations of Algebras}, Lecture Notes in Mathematics, Vol.~903,
Springer-Verlag, Berlin--New York, 1981, 26--38.

\bibitem[BH]{BrunsHerzog}
W.~Bruns and J.~Herzog,
\emph{Cohen--Macaulay Rings},
revised edition, Cambridge Studies in Advanced Mathematics, Vol.~39,
Cambridge University Press, Cambridge, 1998.

\bibitem[CT]{ChangTang}
W.~Chang and Q.~Tang,
\emph{Tilting completion and self-orthogonality modules},
arXiv:2608.20722, 2026.

\bibitem[CLZZ]{CLZZ}
X.-W.~Chen, Z.-W.~Li, X.~Zhang and Z.~Zhao,
\emph{Comparing $\tau$-tilting modules and $1$-tilting modules},
arXiv:2501.02466, 2025.

\bibitem[E]{Enomoto}
H.~Enomoto,
\emph{Maximal self-orthogonal modules and a new generalization of tilting modules},
arXiv:2301.13498, 2023.

\bibitem[GR]{GeissReiten}
C.~Geiss and I.~Reiten,
\emph{Gentle algebras are Gorenstein},
in \emph{Representations of algebras and related topics},
Fields Inst. Commun. \textbf{45}, Amer. Math. Soc., Providence, RI, 2005, 129--133.


\bibitem[Gi]{Giatagantzidis}
O.~Giatagantzidis,
\emph{Radical preservation and the finitistic dimension},
Bull. London Math. Soc. \textbf{58} (2026), no.~1, Paper No.~e70222, 23 pp.


\bibitem[GZH]{GreenHuisgen}
E.~L.~Green and B.~Zimmermann-Huisgen,
\emph{Finitistic dimension of artinian rings with vanishing radical cube},
Math. Z. \textbf{206} (1991), 505--526.

\bibitem[HR]{HappelRingel}
D.~Happel and C.~M.~Ringel,
\emph{Tilted algebras},
Trans. Amer. Math. Soc. \textbf{274} (1982), no.~2, 399--443.

\bibitem[M]{Marczinzik}
R.~Marczinzik,
\emph{On self-orthogonal modules in Iwanaga--Gorenstein rings},
Ann. Represent. Theory \textbf{1} (2024), no.~1, 67--70.

\bibitem[T]{Tachikawa}
H.~Tachikawa,
\emph{Quasi-Frobenius Rings and Generalizations. QF-3 and QF-1 Rings},
Lecture Notes in Mathematics, Vol.~351, Springer-Verlag, Berlin--New York, 1973.

\bibitem[W]{Wei}
J.~Wei,
\emph{Tilting invariance of the Auslander--Reiten conjecture},
Math. Res. Lett. \textbf{17} (2010), no.~1, 171--176.

\bibitem[Z1]{Zhang}
X.~Zhang,
\emph{Self-orthogonal $\tau$-tilting modules and tilting modules},
J. Pure Appl. Algebra \textbf{226} (2022), no.~3, Paper No.~106860.

\bibitem[Z2]{ZhangII}
X.~Zhang,
\emph{Self-Orthogonal $\tau$-Tilting Modules and Tilting Modules II: Annihilator Separation},
arXiv:2608.27937, 2026.

\end{thebibliography}
\end{document}